\documentclass[12pt]{amsart}
\usepackage{amssymb,amsmath,amsthm,mathrsfs,multirow,enumerate}
\usepackage{enumitem,mathtools,cleveref,color}
\usepackage[T1]{fontenc}
\usepackage{charter}
\usepackage[expert]{mathdesign}

\usepackage{color}
\usepackage{graphicx} 
\usepackage[all]{xy}

\usepackage{blkarray} 

\linespread{1.2}

\usepackage[bottom=1.3in, top=1.3in, left=1.3in, right=1.3in]{geometry} 

\numberwithin{equation}{section}

\newtheorem{thm}{Theorem}[section]
\newtheorem{lem}[thm]{Lemma}

\theoremstyle{definition}

\newtheorem*{rem}{Remark}

\theoremstyle{definition} \numberwithin{equation}{section}

\newcommand{\Z}{{\mathbb{Z}}} 
\newcommand{\cha}{{\textup{char}}}

\newcommand{\td}{\ensuremath{~}}

\newcommand{\F}{\ensuremath{\mathbb{F}}}
\newcommand{\vx}{\ensuremath{\vec{x}}}
\newcommand{\vy}{\ensuremath{\vec{y}}}
\newcommand{\vz}{\ensuremath{\vec{z}}}
\newcommand{\vw}{\ensuremath{\vec{w}}}

\pagestyle{plain}

\begin{document}

\title[Maximal orthogonal sets of unimodular vectors]
{Maximal orthogonal sets of unimodular vectors over finite local rings of odd characteristic}
\author{Songpon Sriwongsa and Siripong Sirisuk}

\address{Songpon Sriwongsa \\ Department of Mathematics \\ Faculty of Science
	\\ King Mongkut's University of Technology Thonburi \\ Bangkok, 10140 THAILAND}
\email{\tt songpon.sri@kmutt.ac.th, songponsriwongsa@gmail.com}

\address{Siripong Sirisuk \\ Department of Mathematics and Computer Science\\ Faculty of Science
\\ Chulalongkorn University\\ Bangkok, 10330 THAILAND}
\email{\tt siripong.srs@gmail.com}

\keywords{Finite local rings, Unimodular orthogonal sets.}

\subjclass[2010]{05D05, 15A63, 13H99}

\begin{abstract}
	Let $R$ be a finite local ring of odd characteristic and $\beta$ a non-degenerate symmetric bilinear form on $R^2$.
 In this short note, we determine the largest possible cardinality of pairwise orthogonal sets of unimodular vectors in $R^2$.
\end{abstract}

\date{\today}

\maketitle

\section{Introduction}

Two famous distinct distances and unit distances problems for the plane $\mathbb{R}^2$ were posed by Erd{\H o}s \cite{Erdos1946}. The first problem asks for the minimum number of distinct distances among $n$ points in the plane while the latter problem asks for the maximum number of the unit distances that can occur among $n$ points in the plane. These problems were generalized to the $n$-dimensional Euclidean space case \cite{Erdos1975}. 

Points in the $n$-dimensional vector space over $\mathbb{F}_q$ were considered for similar questions, see \cite{BKT04,IR07,ISX10} for example.  In \cite{IR07}, the authors defined a specific distance between two points in $\mathbb{F}_q^n$ where $q$ is an odd prime power and studied the distinct distances problem. It is natural to ask the two mentioned problems more generally by using an arbitrary quadratic form over $\mathbb{F}_q^n$. Recently, the problems in this direction were studied as follow. Let $\beta$ be a non-degenerate symmetric bilinear form over $\mathbb{F}_q^n$. The largest possible cardinality of a subset $S \subset \mathbb{F}_q^n$ so that $\beta(\vec{x}, \vec{y}) = 0$ for every distinct vectors $\vec{x}, \vec{y} \in S$ was determined in \cite{Vinh12} and the case of $\beta (\vec{x}, \vec{y}) = l$ for all $l \in \mathbb{F}_q$ was treated in \cite{Omran2016}.

Let $R$ be a finite local ring of odd characteristic with identity and a non-degenerate symmetric bilinear form $\beta$ over $R^2$. In this paper, we consider a subset $S$ of unimodular vectors in $R^2$ and determine the largest possible size of $S$ so that for any two distinct vectors $\vec{x}, \vec{y} \in S$, $\beta(\vec{x}, \vec{y}) = 0$. This is a generalization of the problem over $\mathbb{F}_q^2$.

Throughout the paper, we assume that all rings have the identity. In Section \ref{Bil}, we review some backgrounds about bilinear forms over $R^n$. We then show the main result when $n = 2$ in Section \ref{Main}. Finally, we conclude the paper with some comments in Section \ref{Con}. 

\section{Bilinear form over finite local rings}\label{Bil}
Let $R$ be a commutative ring and $n$ a positive integer.
A \textit{bilinear form $\beta$ on $R^n$} is a map $\beta:R^n\times R^n\rightarrow R$ such that 
\[
\beta(\vx+\vy,\vz)=\beta(\vx,\vz)+\beta(\vy,\vz)  \text{ and } \beta(r\vx,\vz)=r\beta(\vx,\vz)
\]
and
\[
\beta(\vx,\vz+\vw)=\beta(\vx,\vz)+\beta(\vx,\vw)  \text{ and }
\beta(\vx,s\vz)=s\beta(\vx,\vz).
\]
for all $\vx,\vy, \vz, \vw\in V$ and $r,s\in R$.
Suppose that $\{e_1,e_2,\dots, e_n\}$ is a basis of $R^n$.
For each bilinear form $\beta$ on $R^n$, we have the $n\times n$ associate matrix $B=\big(\beta(e_i,e_j)\big)$.
A bilinear form $\beta$ is said to be \textit{symmetric} if its associate matrix $B$ is  symmetric, and $\beta$ is said to be \textit{non-degenerate} if $B$ is invertible. A \textit{determinant} of a bilinear form $\beta$, denoted by $\det\beta$, is defined to be $\det B$. Two bilinear forms $\beta_1$ and $\beta_2$ over $R^n$ with corresponding matrices $B_1$ and $B_2$ are {\it equivalent} if there exists an invertible matrix $P$ over $R$ such that $B_2 = P^T B_1 P$.

A local ring is a commutative ring with unique maximal ideal.
If $M$ is the unique maximal ideal of a local ring $R$, then the group of units of $R$ is $R\setminus M$. Note that if $u$ is a unit in $R$, then $u + m$ is a unit for all $m \in M$.
In case that $R$ is of odd characteristic, it is shown in \cite{Sri2017} the classifications of non-degenerate symmetric bilinear forms over $R$.

\begin{lem}\label{form}\cite{Sri2017}
Let $R$ be a finite local ring of odd characteristic with unique maximal ideal $M$. Suppose that $\beta$ is a non-degenerate symmetric form on $R^n$, then one of the following holds:

\begin{enumerate}
	\item if $n$ is odd, then $\beta$ is equivalent to
	 \[
	 \beta(\vx,\vy)=x_1y_1-x_2y_2+\dots+x_{n-2}y_{n-2}-x_{n-1}y_{n-1}+u x_ny_n;
	 \]
	 \item if $n$ is even, then $\beta$ is equivalent to
	 \[
	 \beta(\vx,\vy)=x_1y_1-x_2y_2+\dots+x_{n-3}y_{n-3}-x_{n-2}y_{n-2}+x_{n-1}y_{n-1}-ux_ny_n;
	 \]
\end{enumerate}
where $u=1$ or $u$ is a non-square unit in $R$, and $\vx=(x_1,x_2,\dots,x_n), \vy=(y_1,y_2,\dots,y_n)$ are in $R^n$.
\end{lem}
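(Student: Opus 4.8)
The plan is to reduce the classification to the classical one over the finite field $R/M$ and then lift it back to $R$. Two preliminary observations: since $\cha R$ is odd, $2$ is a unit of $R$ (if $2\in M$ it would be nilpotent, forcing $\cha R$ to be a power of $2$), and $M$ is nilpotent because a finite local ring is Artinian. First I would diagonalize $\beta$ over $R$. By polarization, $\beta(\vx,\vy)=\tfrac12\big(\beta(\vx+\vy,\vx+\vy)-\beta(\vx,\vx)-\beta(\vy,\vy)\big)$, so if $\beta(\vx,\vx)\in M$ for every $\vx$ then every entry of the Gram matrix lies in $M$, whence $\det\beta\in M$, contradicting non-degeneracy; thus some $\vx_0$ has $a:=\beta(\vx_0,\vx_0)\in R^{\times}$ (and $\vx_0$ is then automatically unimodular). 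The map $\vx\mapsto\beta(\vx,\vx_0)$ is a surjection $R^n\to R$, so $(R\vx_0)^{\perp}$ is a free rank-$(n-1)$ direct summand, $R^n=R\vx_0\oplus(R\vx_0)^{\perp}$, and $\beta$ restricts non-degenerately to $(R\vx_0)^{\perp}$; induction on $n$ gives that $\beta$ is equivalent to a diagonal form $\langle d_1,\dots,d_n\rangle$ with every $d_i\in R^{\times}$.

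Next I would normalize the diagonal entries. Reduction modulo $M$ is a surjection $R^{\times}\to(R/M)^{\times}$ with kernel $1+M$ of order $|M|$, a power of the odd residue characteristic and hence odd; so squaring is a bijection on $1+M$, and a unit of $R$ is a square exactly when its image in $R/M$ is. As $R/M$ is a finite field of odd order, $|(R/M)^{\times}/((R/M)^{\times})^2|=2$, so $|R^{\times}/(R^{\times})^2|=2$; fix a non-square unit $u$. Rescaling basis vectors multiplies each $d_i$ by a square, so we may assume every $d_i\in\{1,u\}$. Moreover $\det\beta$ is well defined modulo squares (equivalent matrices have determinants differing by a square), and a direct check shows that, for each parity of $n$, the two forms displayed in the statement (one with $u=1$, one with $u$ a non-square unit) are diagonal with unit entries and realize both classes of $R^{\times}/(R^{\times})^2$. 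Hence it remains only to show that $\beta$ is equivalent to whichever listed form shares its determinant class.

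That would follow from the lifting statement: \emph{two non-degenerate symmetric forms over $R$ of equal dimension that are equivalent over $R/M$ are already equivalent over $R$.} Granting it, the classical theory over the finite field $R/M$ (dimension and determinant mod squares are a complete set of invariants) makes $\beta$ and the appropriate listed form equivalent over $R/M$, and the lifting promotes this to an equivalence over $R$. I would prove the lifting by successive approximation along $R\supset M\supset M^2\supset\cdots$: if $P$ is invertible with $P^{\T}B_1P\equiv B_2\pmod{M^k}$, set $C:=P^{\T}B_1P-B_2$, which is symmetric with entries in $M^k$, and replace $P$ by $P(I+E)$ with $E:=-\tfrac12 B_2^{-1}C$; since $C$ and $B_2$ are symmetric and $2$ is a unit, $B_2E+E^{\T}B_2=-C$, so the new error lies in $M^{2k}\subseteq M^{k+1}$, and nilpotence of $M$ terminates the recursion.

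\textbf{The main obstacle} is precisely this passage from $R/M$ back to $R$: over the residue field the result is classical, and the ring-theoretic substance is concentrated in (i) detecting squares of $R$ on $R/M$, which needs $1+M$ to have odd order, and (ii) the lifting lemma, which needs both the nilpotence of $M$ and the invertibility of $2$ to solve the infinitesimal equation $B_2E+E^{\T}B_2=-C$ at each stage. By contrast, once $2$ is a unit and one has the orthogonal splitting $R^n=R\vx_0\oplus(R\vx_0)^{\perp}$, the diagonalization step is entirely routine.
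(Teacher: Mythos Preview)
The paper does not prove this lemma at all: it is quoted verbatim from \cite{Sri2017} and used as a black box, so there is no argument here to compare yours against. Your proposal is a correct self-contained proof along the standard lines one would expect for this classification (diagonalize via orthogonal splitting, identify $R^{\times}/(R^{\times})^{2}$ with $(R/M)^{\times}/((R/M)^{\times})^{2}$ using that $|1+M|$ is an odd $p$-power, invoke the determinant-mod-squares classification over the residue field, and lift congruences $P^{\T}B_1P\equiv B_2\pmod{M^{k}}$ stepwise using $E=-\tfrac12 B_2^{-1}C$ and the nilpotence of $M$). Each of these steps is sound as you have written it; in particular your computation $B_2E+E^{\T}B_2=-C$ uses exactly the symmetry of $B_2$ and $C$ together with the invertibility of $2$, and the odd-order claim for $1+M$ follows since a finite local ring of odd characteristic has $p$-power cardinality.

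One very small redundancy worth noting: once you have diagonalized with entries in $\{1,u\}$ and know that $R^{\times}/(R^{\times})^{2}$ has order two, you could finish more directly by proving the single relation $\langle u,u\rangle\cong\langle 1,1\rangle$ over $R$ (lift a solution of $x^{2}+y^{2}=u$ from $R/M$ via the same Newton-type correction), which lets you move non-square entries in pairs without appealing to the full lifting lemma for arbitrary forms. Your route through the general lifting statement is of course also fine and arguably cleaner conceptually.
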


A vector $\vx=(x_1,x_2,\dots,x_n)$ in $R^n$ is said to be \textit{unimodular} if the ideal generated by $x_1,x_2,\dots,x_n$ is equal to $R$.
In particular, if $R$ is a field, then every nonzero vector is unimodular.
For the case $R$ is a finite local ring, we have the following lemma on unimodular vectors.
\begin{lem}\cite{MeePui2013}\label{uni}
	Let $R$ be a finite local ring. Then a vector $\vx=(x_1,x_2,\dots,x_n)\in R^n$ is unimodular if and only if $x_i$ is a unit for some $i\in \{1,2,\dots,n\}$.
\end{lem}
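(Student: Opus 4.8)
The plan is to prove the two implications separately, relying only on the fact recalled just above the statement: in a local ring $R$ with unique maximal ideal $M$, the group of units is exactly $R\setminus M$. Finiteness of $R$ will in fact play no role, but assuming it does no harm.

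For the ``if'' direction I would argue directly. Suppose $x_i$ is a unit for some $i$. The ideal $I=(x_1,\dots,x_n)$ generated by the coordinates contains $x_i$, hence contains $x_i^{-1}x_i=1$, so $I=R$ and $\vx$ is unimodular. Note this half uses nothing beyond $R$ being a commutative ring with identity.

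For the ``only if'' direction I would use the contrapositive. Suppose no coordinate $x_i$ is a unit; then each $x_i$ lies in $R\setminus(R\setminus M)=M$. Since $M$ is an ideal of $R$, the ideal $(x_1,\dots,x_n)$ is contained in $M$, which is proper, so $(x_1,\dots,x_n)\subsetneq R$ and $\vx$ is not unimodular. Equivalently, one can phrase it forwards: if $\vx$ is unimodular then $1=\sum_i r_i x_i$ for some $r_i\in R$, and if every $x_i$ were a non-unit each summand $r_ix_i$ would lie in $M$, forcing $1\in M$, a contradiction.

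I do not anticipate any genuine obstacle: the lemma is essentially a repackaging of the defining property of a local ring (non-units form the maximal ideal), and the only point requiring a moment's care is making sure the argument for the ``only if'' direction invokes the local hypothesis (an ideal generated by finitely many non-units need not be proper in a general ring) rather than just commutativity.
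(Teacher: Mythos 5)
Your proof is correct, and it is the standard argument: the ``if'' direction needs only that a unit generates the whole ring, while the ``only if'' direction uses precisely the local-ring fact that the non-units are exactly the maximal ideal $M$, so an ideal generated entirely by non-units sits inside $M$ and is proper. The paper itself gives no proof of this lemma --- it is quoted from the cited reference of Meemark and Puirod --- but your argument is exactly the expected one, and you correctly flag the only subtle point, namely that locality (not mere commutativity) is what makes the ``only if'' direction work.
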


\section{Main result} \label{Main}

For a non-degenerate symmetric bilinear form $\beta$ on $R^n$,
a \textit{unimodular orthogonal set} is a set $S$ of unimodular vectors in $R^n$ in which ${\beta(\vx,\vy)=0}$ for any two distinct vectors $\vx,\vy \in S$.
We denote by $\mathcal{S}(R,n)$ the largest possible cardinality of a unimodular orthogonal set in $R^n$. Here, we only consider $\mathcal{S}(R, 2)$ where $R$ is a finite local ring of odd characteristic.

\begin{thm}\label{main}
	Let $M$ be the maximal ideal of $R$.
	Then
	\begin{align*}
	\mathcal{S}(R,2)=\begin{cases}
	|R|-|M|, &  \det{\beta} \text{ is square;} \\
	2, & \det\beta \text{ is non square}.
	\end{cases}
	\end{align*}
\end{thm}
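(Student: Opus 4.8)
The plan is to reduce to the normal forms of Lemma~\ref{form} and treat each separately. First I would note that $\mathcal S(R,2)$ depends only on the equivalence class of $\beta$: if $B'=P^{\mathrm T}BP$ with $P$ invertible over $R$, then $\vx\mapsto P\vx$ is a bijection of $R^2$ sending unimodular vectors to unimodular vectors (the coordinates of $P\vx$ and of $\vx$ generate the same ideal) and satisfying $\beta'(\vx,\vy)=\beta(P\vx,P\vy)$, so it carries $\beta'$-orthogonal unimodular sets bijectively onto $\beta$-orthogonal ones. Hence by Lemma~\ref{form} I may assume $\beta(\vx,\vy)=x_1y_1-u\,x_2y_2$, where $u=1$ in the case $\det\beta$ is a square and $u$ is a non-square unit in the case $\det\beta$ is a non-square. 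Throughout write $R^{\times}=R\setminus M$, so $|R^{\times}|=|R|-|M|$.

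The technical heart is an explicit description of $v^{\perp}$ for a unimodular $v=(v_1,v_2)$. By Lemma~\ref{uni} one of $v_1,v_2$ is a unit, and solving $v_1w_1=u v_2w_2$ shows $v^{\perp}=Rw$, where $w=(uv_2v_1^{-1},1)$ if $v_1\in R^{\times}$ and $w=(1,v_1(uv_2)^{-1})$ if $v_2\in R^{\times}$. In either case $w$ has a coordinate equal to $1$, so $r\mapsto rw$ is injective and $rw$ is unimodular exactly when $r\in R^{\times}$. I then record two consequences: the unimodular vectors in $v^{\perp}$ are precisely the $|R|-|M|$ vectors $rw$ with $r\in R^{\times}$, and any two distinct among them are mutually orthogonal if and only if $\beta(w,w)=0$; moreover a one-line substitution gives $\beta(w,w)=-uv_1^{-2}\,\beta(v,v)$ (respectively $-u^{-1}v_2^{-2}\,\beta(v,v)$), so $w$ is isotropic if and only if $v$ is.

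From this the bound $\mathcal S(R,2)\le|R|-|M|$ follows uniformly. Let $S$ be a unimodular orthogonal set and fix $v\in S$; every other member is a unimodular vector in $v^{\perp}$, hence of the form $rw$ with $r\in R^{\times}$. If $\beta(v,v)=0$ then $v\in v^{\perp}$ as well, so $S\subseteq\{rw:r\in R^{\times}\}$ and $|S|\le|R|-|M|$. If $\beta(v,v)\ne0$ then $\beta(w,w)\ne0$, so no two distinct unit multiples of $w$ are orthogonal, whence $|S\setminus\{v\}|\le1$ and $|S|\le 2\le|R|-|M|$. When $u=1$ the bound is attained by $\{(a,a):a\in R^{\times}\}$, a unimodular set of size $|R|-|M|$ with $\beta((a,a),(b,b))=ab-ab=0$; this settles the first case.

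Finally, when $u$ is a non-square unit I would sharpen the bound to $2$, and for that it suffices to show that no unimodular vector is isotropic, since then only the second alternative above can occur. Indeed, if $v_1^2=u v_2^2$ with $v$ unimodular, then $v_2\in R^{\times}$ would make $u=(v_1v_2^{-1})^2$ a square, while $v_2\in M$ would force $v_1^2=u v_2^2\in M$, hence $v_1\in M$ since $M$ is prime, contradicting unimodularity in either case. So $\mathcal S(R,2)\le 2$, and equality holds via $\{(1,0),(0,1)\}$. I expect the only delicate point to be the bookkeeping with zero-divisors: over a local ring one cannot factor $v_1^2-u v_2^2$ or argue ``on an isotropic line'' as over a field, which is precisely why the explicit form of $v^{\perp}$ is used throughout and why the primality of $M$ is what rules out unimodular isotropic vectors. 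Everything else amounts to a handful of short direct computations.
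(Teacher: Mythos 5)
Your proposal is correct and is essentially the paper's own argument: after reducing to the normal forms of Lemma~\ref{form}, you parametrize the unimodular vectors orthogonal to a fixed unimodular $v$ as the unit multiples of a single vector $w$ having a coordinate equal to $1$ (these are exactly the paper's vectors $(a^{-1}by,y)$, resp.\ $(a^{-1}zby,y)$, with $y$ a unit) and then test orthogonality among those multiples, which is precisely the computation carried out in the paper, your non-square case (no unimodular isotropic vectors, via $u$ non-square and $M$ prime) being the paper's contradiction $1=z(a^{-1}b)^{2}$ in disguise. The only genuine difference is organizational: your isotropy dichotomy for $v$ (transferred to $w$ by $\beta(w,w)=-uv_1^{-2}\beta(v,v)$) treats the square-determinant case uniformly, so you do not need the paper's separate handling of $|R|-|M|=2$ through Theorem~4 of \cite{Omran2016}.
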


\begin{proof}
	Assume that $\det\beta$ is a non square unit.
	By Theorem\td\ref{form}, 
	\[
		\beta(\vx,\vy)=x_1y_1-zx_2y_2
	\]
	where $z$ is a fixed non square unit and $\vx=(x_1,x_2), \vy=(y_1,y_2) \in R^2$.
	Let $S$ be a unimodular orthogonal set in $R^2$ and $(a,b)\in S$.
	Since $(a,b)$ is unimodular, $a$ or $b$ is a unit (by Lemma\ref{uni}).
	Suppose that $a$ is a unit.
	Then another vectors in $S$ are of the form $(a^{-1}zby,y)$ where $y$ is a unit in $R$.
	If $\vy_1=(a^{-1}zby_1,y_1),\vy_2=(a^{-1}zby_2,y_2)\in S$, 
	then $\beta(\vy_1,\vy_2)=0$ implies $1=z(a^{-1}b)^2$, so
	$1\in M$ or $z$ is a square unit which is a contradiction.
	We have a similar argument for the case $b$ is a unit.
	Thus, $\mathcal{S}(R,2)\leq 2$.
	Clearly, $\{(1,0),(0,1)\}$ is a unimodular orthogonal set. 
	Therefore, $\mathcal{S}(R,2)= 2$.

	Next, assume that $\det\beta$ is a square unit.
	By Theorem\td\ref{form}, 
	\[
		\beta(\vx,\vy)=x_1y_1-x_2y_2
	\]
	for $\vx=(x_1,x_2), \vy=(y_1,y_2)\in R^2$.
	Clearly, 
	\[
	S=\{(x,x)\mid x\in R\setminus M\}
	\]
	 is a unimodular orthogonal set.
	Then $\mathcal{S}(R,2)\geq |R|-|M|$.
	We show that the converse of the inequality holds.
	Since $\cha(R)$ is odd, $|R|-|M|\geq 2$.
	If $|R|-|M|=2$, then $|R|=3$ and $|M|=1$, i.e., $R=\F_3$.
	By Theorem\td4 of \cite{Omran2016}, $\mathcal{S}(R,2)=2=|R|-|M|$.
	Assume that $|R|-|M|\geq 3$.	
	Let $S$ be a maximal unimodular orthogonal set in $R^2$ and $(a,b)\in S$.
	Since $(a,b)$ is unimodular, $a$ or $b$ is a unit (by Lemma\ref{uni}).
	Suppose that $a$ is a unit.
	Then another vectors in $S$ are of the form $(a^{-1}by,y)$ where $y$ is a unit in $R$, so $|S| \le |R| - |M| + 1$.
	If $\vy_1=(a^{-1}by_1,y_1)$ and $\vy_2=(a^{-1}by_2,y_2)$ are two distinct vectors in $S$. 
	then $\beta(\vy_1,\vy_2)=0$ implies $1=(a^{-1}b)^2$, and so $(a,b)=(a^{-1}b^2,b)$ is also in that form.
	It can be argued similarly for the case that $b$ is a unit.
	Thus, we have $\mathcal{S}(R,2)\leq |R|-|M|$.
	Therefore, the equality holds.	
\end{proof}

\begin{rem}
	From the proof of Theorem\td\ref{main}, we have the following.
	\begin{enumerate}
		\item If $\det\beta$ is square, then all maximal unimodular orthogonal sets of $R^2$ are
		\begin{itemize}
		\item $\{(a,b),(a^{-1}bzy,y)\}$ where $a\in R^\times$, $b\in R$ and $y\in R^\times$, and
		\item  $\{(a,b),(x,{(bz)}^{-1}ax)\}$ where $a\in M$ and $b,x\in R^\times$.
		\end{itemize}
		\item If $\det\beta$ is non-square, then all maximal unimodular orthogonal sets of $R^2$ are
		\[
		\{(ux,x)\mid x\in R^\times \} \text{ or } \{(x,ux)\mid x\in R^\times\}
		\]
		where $u\in R$ with $u^2=1$.
		In particular, if $R=\Z_{p^s}$, then all maximal unimodular orthogonal sets of $R^2$ are
		\[
		\{(x,x)\mid x\in R^\times \}, \{(-x,x)\mid x\in R^\times \} \text{ and } \{(x,-x)\mid x\in R^\times\}.
		\]
	\end{enumerate}
\end{rem}

\section{Concluding remarks}\label{Con}

The problem of finding the largest cardinality of pairwise orthogonal subset in $\mathbb{F}_q^n$ has been solved in \cite{Omran2016,Vinh12}. In Theorem \ref{main}, we solve the similar problem for unimodular vectors in $R^2$ where $R$ is a finite local ring of odd characteristic by using an elementary counting method from the properties of $R$. The problem when $n \ge 3$ could also be considered but it seems to be difficult if we use the method in the proof of Theorem \ref{main} because all equations will be more complicated. Unlike the finite fields case \cite{Omran2016}, the problem when $n$ is odd and $R$ is a general finite local ring is not easy to manage even finding a lower bound for $\mathcal{S}(n, R)$ since $R$ can have zero divisors. We plan to discuss some of these extension works using a new technique in another paper.

\end{document}